\newcommand{\ncom}{\newcommand}
\ncom{\beqn}{\begin{eqnarray*}} \ncom{\eeqn}{\end{eqnarray*}}
\ncom{\beq}{\begin{eqnarray}} \ncom{\eeq}{\end{eqnarray}}
\newtheorem{thm}{Theorem}[section]
\newtheorem{assum}{Assumption}[section]
\newtheorem{pro}{Proposition}[section]
\newtheorem{cor}[thm]{Corollary}
\newtheorem{definition}{Definition}[section]
\newtheorem{remark}{Remark}[section]
\newtheorem{lemma}{Lemma}[section]
\newtheorem{example}{Example}[section]
\ncom{\ben}{\begin{enumerate}} \ncom{\een}{\end{enumerate}}
\ncom{\bl}{\begin{lemma}} \ncom{\el}{\end{lemma}}
\ncom{\bt}{\begin{thm}} \ncom{\bn}{\begin{notation}}
\ncom{\en}{\end{notation}} \ncom{\et}{\end{thm}}
\ncom{\bp}{\begin{pro}} \ncom{\ep}{\end{pro}}
\ncom{\brm}{\begin{remark}} \ncom{\erm}{\end{remark}}
\ncom{\bex}{\begin{example}} \ncom{\eex}{\end{example}}
\ncom{\bd}{\begin{definition}} \ncom{\ed}{\end{definition}}
\ncom{\bc}{\begin{cor}} \ncom{\ec}{\end{cor}}
\ncom{\ba}{\begin{assum}} \ncom{\ea}{\end{assum}} \ncom{\pf}{{\bf
Proof: }} \ncom{\R}{I\!\!R}
\def\deff{\stackrel{\triangle}{=}}
\ncom{\doq}{\nabla _q} \ncom{\dop}{\nabla _p} \ncom{\dopt}{\nabla
_{\tilde{p}}}
\newcommand{\vectornorm}[1]{\vert\vert #1\vert \vert}
\def\norm#1{\|#1\|}
\DeclareMathOperator{\rank}{rank} \DeclareMathOperator{\diag}{diag}
\DeclareMathOperator{\Col}{Col} \DeclareMathOperator{\Null}{Null}
 \DeclareMathOperator{\Span}{Span}
 \def\sign{\mbox{\em sign}}
\begin{document}
\title{Synchronization and semistability analysis   of the  Kuramoto model of coupled nonlinear oscillators}
\author{Vishaal Krishnan\thanks{Vishaal is a graduate student in the Department of Mechanical Engineering, Indian Institute of Technology
Madras, Chennai-600036, India, (email:
vishaalkrishnan1992@gmail.com).}, Arun D.
Mahindrakar,~\IEEEmembership{Member,~IEEE}\thanks{Arun D.
Mahindrakar is with the Department of Electrical Engineering, Indian
Institute of Technology Madras, Chennai-600036, India, (email:
arun\_dm@iitm.ac.in).} and Somashekhar S. Hiremath
\thanks{Somashekhar S. Hiremath is with the Department of Mechanical Engineering, Indian
Institute of Technology Madras, Chennai-600036, India, (email:
somashekhar@iitm.ac.in).}}
\maketitle
\begin{abstract}
An interesting problem in synchronization is the study of coupled
oscillators, wherein oscillators with different natural frequencies
synchronize to a common frequency and equilibrium phase difference.
In this paper, we investigate the stability and convergence
in a network of coupled oscillators described by the Kuramoto model.
We consider networks with finite number of oscillators, arbitrary interconnection topology, non-uniform coupling gains and non-identical natural frequencies. We show that such a network synchronizes provided the underlying graph is connected and certain conditions on the coupling gains are satisfied. In the analysis, we consider as states the phase and angular frequency differences between the oscillators, and the resulting dynamics possesses a continuum of equilibria. The synchronization problem involves establishing the Lyapunov stability of the fixed points and showing convergence of trajectories to these points. The synchronization result is established in the framework of semistability theory.

\begin{IEEEkeywords} Kuramoto oscillator, synchronization, global convergence, direction cone, nontangency,
semistability.
\end{IEEEkeywords}
\end{abstract}
\section{Introduction}
 Synchronization as a control problem in networked
multi-agent systems has always assumed great importance. In this
context, the study of synchronization in natural and engineered
systems is of significant value, as it would serve to further the
understanding of the phenomenon. The Kuramoto model, in which
oscillators in the network spontaneously synchronize for coupling
gains above a certain value, is of particular interest in this
regard. The Kuramoto model has been used in the past to study a
variety of systems, biological systems such as networks of pacemaker
cells in the heart, circadian pacemaker cells in the brain, laser
arrays and superconducting Josephson junctions \cite{Strogatz}.

The study of synchronization in networked dynamical systems entails
an analysis of stability of synchronized states and convergence of
trajectories to these states. It is also important to obtain
necessary and sufficient conditions for synchronization and
characterize the basins of attraction corresponding to the
synchronized states.

The interest in synchronization problem can be gauged by the vast
body of related  literature in the area of networked  dynamical
systems. In the paper \cite{MirolloStrogatz}, the authors discuss
synchronization in pulse-coupled biological oscillators. The article
\cite{Strogatz} reviews work on the Kuramoto model. Although
published years ago, it serves to be a very useful starting point.
Subsequently, in \cite{Jadbabaie}, the authors prove local
exponential
 convergence for the Kuramoto model of coupled oscillators, individual oscillators having
  identical natural frequencies with uncertainties, from a control-theoretic
  viewpoint, while in \cite{spong05}, the authors derive a lower bound on the
   coupling gain for the onset of synchronization and a lower bound on the same
    gain which is sufficient for synchronization, with exponential convergence. The authors
     in \cite{Acebron} present a review of the work on synchronization in the
      Kuramoto model. In \cite{Dorfler},  the necessary
and sufficient conditions on the critical coupling to achieve
synchronization in the Kuramoto model are derived and in \cite{bullo_Dorfler} the authors
analyze the non-uniform Kuramoto model and its equivalence to the classical swing equation in power networks.
In \cite{Mirollo}, the authors analyze the linear stability of the
phase-locked state in the Kuramoto model.  Robustness of the
phase-locking in the Kuramoto model subjected to time-varying
natural frequencies is analyzed  in \cite{Antoine}. In a recent
development the authors in \cite{Wang} have shown the exponential
synchronization convergence of Kuramoto oscillator in the presence
of a pacemaker. The pacemaker, also called as pinner, supplies
reference phase to the rest of the network and the objective is to
synchronize the phases of the rest of the network to the reference
phase. In \cite{Maistrenko}, the authors illustrate the mechanism of
desynchronization with decreasing coupling strength in a
$3$-oscillator network described by the Kuramoto model, and present
numerical illustrations for the $N=5$ case. In \cite{Franci}, the
authors use proportional mean-field feedback control to achieve
desynchronization in the Kuramoto model for oscillators with small
natural frequencies. Analysis of limit cycles in interconnected
oscillators, using a method based on energy exchange, considering
the oscillator networks as open systems is presented in
\cite{Guy_Bart_Stan}. As for relevant work in synchronization of
multi-agent systems, the authors in \cite{Scardovi} study
synchronization in networks of identical linear systems. In a recent
paper \cite{Trentelman}, the authors derive protocols for robust
synchronization in uncertain linear multi-agent systems.

 Lyapunov stability of an equilibrium point
guarantees that for initial conditions close to the equilibrium
point, the trajectories remain arbitrarily close
 to it, while asymptotic stability guarantees the convergence of trajectories to
 the equilibrium point, for initial conditions in its neighbourhood. Naturally, asymptotic
 stability implies Lyapunov stability. The two key ideas are those of
 stability (in the sense of Lyapunov) and convergence of solutions. The concept
 of semistability finds relevance in systems that possess a continuum
 of (or non-isolated) equilibria. In any neighbourhood of a non-isolated equilibrium point there
 exists another equilibrium point, which implies that such an equilibrium point cannot be
 asymptotically stable. However, for initial conditions in the neighbourhood of such an
 equilibrium point, it is still of interest to study the stability and
 convergence of solutions (to possibly different equilibrium points). The concept of
 semistability of an equilibrium point essentially encompasses these two notions, of
 stability (in the sense of Lyapunov) of the equilibrium point, and convergence of solutions.

We analyze the stability in the Kuramoto model from a control
theoretic viewpoint. For the problem of synchronization in the
Kuramoto model, we consider the dynamics of phase and angular
frequency differences between oscillators in the network, by a
linear transformation from the space of phase angles and angular
frequencies of oscillators in the network. The natural frequencies
of the individual oscillators in such a setting influence the
initial conditions of the system, which possesses a continuum of
equilibria. The set of all initial conditions in the Kuramoto model
corresponds to an equivalent subset of initial conditions of the
system. The problem of synchronization of oscillators in the network
now reduces to that of convergence of trajectories to limit points
in the equilibrium set, and the stability of the limit points.
Semistability, as a notion of stability is appropriate in this
context. We consider the general case of the Kuramoto model with non-identical natural
frequencies, non-uniform (but symmetric) coupling with the underlying graph corresponding to the network being connected.
We  provide conditions on the coupling strengths and establish that for all initial phases
in a compact set contained in a half-circle, the oscillators synchronize. We further show that the set $[-\pi/2,\pi/2]$
is an attracting set in $(-\pi,\pi]$ for the phase differences, and through semistability analysis establish that the
trajectories converge to stable fixed points, which form the contribution of this paper.

The outline of the paper is as follows.  In subsection
\ref{background} we introduce the necessary background material on
semistability of continuous systems. In section \ref{model} we
introduce the Kuramoto model, define various notions of
synchronization, and follow it up with the formulation of the
problem for the case of two coupled oscillators for illustrative
purposes. In section \ref{2semi} we derive semistability and global
synchronization results for the two-oscillator case. Section
\ref{Nmodel} contains the analysis for the general case of networks
with finite $N$ coupled oscillators, with non-uniform coupling,
arbitrary natural frequencies and arbitrary interconnection
topology. In section \ref{num}, we present simulation results and conclude with a summary of results
and contributions in Section \ref{con}.
\subsection{Notations and preliminaries}
Let $\norm{\cdot} $ denote the  2-norm  on $\R^n$. The unit circle on
$\R^2$ is given by ${S}^{1} = \{x \in \R^2 : \norm{x} = 1\}$ and we
denote by $\Gamma^n\deff \underbrace{S^1\times S^1\times \cdots
\times S^1}_{n}$, the $n$-torus.
 Let $\overline{\mathcal{K}}$ denote the
closure of $\mathcal{K}$, where $\mathcal{K} \subseteq \R^n$. Two subsets
$A$ and $B$ of $\R^n$ are said to be separated if both
$\overline{A}\cap B$ and $A\cap \overline{B}$ are empty. A set
$\mathcal{K} \subseteq \R^n$  is said to be connected if $\mathcal{K}$ is
not a union of two non-empty separated sets. The connected component
of $\mathcal{K}$ is the maximal connected subset of $\mathcal{K}$.
A set $\mathcal{K}$ is called convex if $\alpha x+ (1-\alpha)y \in
\mathcal{K}$ for all \ $x,y \in \mathcal{K}$\ and $ \alpha \in
[0,1]$. The convex hull of $\mathcal{K}$, denoted by conv
$\mathcal{K}$, is the intersection of all convex sets containing
$\mathcal{K}$ and co $\mathcal{K}$ denote the union of convex hulls
of the connected components of $\mathcal{K}$. The cone generated by co
$\mathcal{K}$ is denoted by coco $\mathcal{K}$. Let $\Col(A)$ denote
the column space of a matrix $A$, while $\Null(A)$ denotes its
nullspace. For
$X=(x_1,x_2,\ldots,x_n)\in \Gamma^n$, $\sin(X)\deff
(\sin(x_1),\sin(x_2),\ldots,\sin(x_n))$. We denote by  ${\pmb 1}_N $ an $N$-dimensional vector with all components equal to one.
%
%----------------------------------
\subsection{Background on semistability of continuous systems}
\label{background}
 Consider the dynamical system described by
\begin{equation}
\dot{x}(t)  =  f(x(t)),
\label{eq:sde}
\end{equation}
where $f:D\longrightarrow \ \R^n$ is continuous on an open and connected set $D\subseteq \ \R^n$.
By Peano's existence theorem \cite{hartman}, there exists a $t_1>0$ such that on $[0, t_1]$, the differential equation \eqref{eq:sde} possesses a
continuous solution $x:[0, t_1] \longrightarrow {D}$. Further we assume that the solution is $C^1$ and unique.
Let $\psi(t,x_0)$ denote the solution of \eqref{eq:sde} that exists for all $t \in [0,\infty)$
and satisfies the initial condition $x(0)=x_0$.
These assumptions imply that the map $\psi:[0,\infty)\times D\longrightarrow D$ is continuous, satisfies $\psi(0,x_0)=x_0$ and possesses
the semi-group property $\psi(t_1,\psi(t_2,x))=\psi(t_1+t_2,x)$ for all $t_1,t_2\ge 0$
and $x\in D$. Given $t\in \R$, we denote the map $\psi(t,\cdot):D\longrightarrow D$ by $\psi_t$.
A point $x_e\in D$ satisfying $f(x_e)=0$ or $\psi(t,x_e) = x_e$ for
all $t \geq 0$ is an equilibrium point of \eqref{eq:sde}. The
collection of all equilibrium points of \eqref{eq:sde} is the set of
equilibria, denoted by $\mathcal{E}$. The system is said to possess a continuum of
equilibria if the set $\mathcal{E}$ has no isolated points. %%
We list only the key  definitions from \cite{bhat_nontangency} related to semistability analysis
and for details, we refer the reader to the same reference.
\begin{definition}
The system \eqref{eq:sde} is convergent if, for
every $x \in \R^n$, $\lim_{t \rightarrow \infty} \psi(t,x)$ exists,
is a singleton.
\end{definition}
\begin{definition}
An equilibrium point $x \in \R^n$ is semistable
if there exists a open
neighbourhood $\mathcal{U} \subseteq \R^n$ of $x$ such that, for
every $z \in \mathcal{U}$, $\lim_{t \rightarrow \infty} \psi(t,z)$
exists, and is Lyapunov stable.
\end{definition}
\begin{definition}
Given $x \in \mathcal{G}, \; \mathcal{G} \subseteq \R^n$, the direction cone $\mathcal{F}_x$ of $f$ at $x$ relative to $\mathcal{G}$ is the intersection of all sets of the form  $\overline{\mathrm{coco}\ (f(\mathcal{U}) \setminus  \{0 \})}$, where $\mathcal{U} \subseteq \mathcal{G}$ is a relatively open neighbourhood of $x$.
\label{def:direction_cone}
\end{definition}
If $\mathcal{K}$ is a smooth submanifold of $\mathbb{R}^n$, then $T_{x}\mathcal{K}$ is the usual tangent
space \cite{isidori} to $\mathcal{K}$ at $x$.
The vector field $f$ is nontangent to the set $\mathcal{K}$ at the point
$x \in \mathcal{K}$ if $T_{x}\mathcal{K} \cap
\mathcal{F}_x \subseteq \{0\}$.
The following results from \cite{bhat_nontangency} establishes
semistability result based on the sufficient condition of
nontangency.
\bc \label{cor:7.2} Let ${\cal G} \subseteq \R^n$ be positively
invariant. Suppose $V :{\cal G} \longrightarrow \R$ is a continuous
function such that $\dot{V}$ is defined on ${\cal G}$. Let $x \in
\dot{V}^{-1}(0)$ be a local maximizer of $\dot{V}$ relative to
${\cal G}$ and a local minimizer of $V$ relative to the set ${\cal
K} \triangleq {\cal G}\setminus \overline{\dot{V}^{-1}(0)}$. Then
\begin{itemize}
\item [(i)] If $f$ is nontangent to $V^{-1}(V(x))$ at $x$ relative to ${\cal G}$, then $x$ is a Lyapunov stable equilibrium relative to ${\cal G}$.
\item [(ii)] If $f$ is nontangent to $\overline{\dot{V}^{-1}(0)}$ at $x$ relative to ${\cal G}$, then $x$ is a Lyapunov stable equilibrium relative to ${\cal G}$.
\item [(iii)] If there exists a relatively open neighbourhood ${\cal U} \subseteq {\cal G}$ of $x$ such that every equilibrium in $U$ is a local minimizer of $V$ relative to ${\cal K}$ and  $f$ is nontangent to $\overline{\dot{V}^{-1}(0)}$ at every point in ${\cal U} \cap \overline{\dot{V}^{-1}(0)}$ relative to ${\cal G}$, then $x$ is a semistable equilibrium relative to ${\cal G}$.
\item [(iv)] If $x$ is an isolated point of the set ${\dot{V}^{-1}(0)}$, then $x$ is an asymptotically stable equilibrium relative to ${\cal G}$.
\end{itemize}
\ec
\section{Kuramoto model}
\label{model}
 The Kuramoto model \cite{spong05} for a  group of $N$ oscillators with symmetric  coupling  between them is governed by
\beq
    \dot{\theta}_i = \omega_i + \sum_{j=1, j\neq i}^{N} \frac{K_{ij}}{N} \sin(\theta_j - \theta_i )
\label{sys}
\eeq
where $\theta_i\in S^1$ is the phase of the $i$-th oscillator,
$\omega_i$ is its natural frequency and $ K_{ij}>0$ is the coupling
gain. Both $\omega_i$ and $K_{ij}$ are assumed to be constants.

%%%%%%%%%Definitions %%%%%%%%%%%%%%%
We need   the notion of phase-locking/exact synchronization
\cite{Antoine} associated with \eqref{sys}. A solution
$\theta^{\ast}$ of \eqref{sys} is said to {\it  synchronize} if and
only if \beqn \dot{\theta}_i^{\ast}-\dot{\theta}_j^{\ast}\rightarrow
0 \;\;\mbox{as} \;\; t\rightarrow  \infty \;\forall \;
i,j,=1,\ldots, N \eeqn
 and, further it is said to {\it  exactly synchronize} if
 \beqn
{\theta}_i^{\ast}-{\theta}_j^{\ast}\rightarrow 0 \;\; \mbox{as} \;
\;t \rightarrow \infty \;\; \forall \;i,j,=1,\ldots, N. \eeqn
Further, the network \eqref{sys} is said to {\it  globally
synchronize} if \beqn
\dot{\theta}_i^{\ast}-\dot{\theta}_j^{\ast}\rightarrow 0
\;\;\mbox{as} \;\; t\rightarrow  \infty \;\forall \; i,j,=1,\ldots,
N \eeqn for all initial conditions $\theta_i(0), i=1,\ldots,N$.
%%%%%%%%%%%%%%%%%%%%%%%%
The dynamics of the  Kuramoto network with two  oscillators is first
considered.

By letting $\Delta\omega=\omega_1-\omega_2, K_{12}=K$ and $\Delta
\theta=\theta_1-\theta_2$, the dynamics of the phase difference
between the two oscillators is
\beq
\begin{array}{lcl}
\Delta\dot{\theta} &= &\Delta\omega - K \sin(\Delta\theta)\\
\Delta\ddot{\theta}& = &-K\Delta\dot{\theta} \cos(\Delta\theta),
\label{pdyn}
\end{array}
\eeq
Defining  $x_1=\Delta\theta$  and $x_2=\Delta\dot{\theta}$, the  state-space representation of the system  is given by
\beq
\dot{x}=f(x)=\left(
\begin{array}{c}
 x_2\\
-K x_2 \cos{x_1} \label{odyn}
\end{array}\right)
\eeq
and the associated  set of equilibria of \eqref{odyn} is
$\mathcal{E} = \{ (x_1,x_2) :  x_2= 0\}$. A phase-locking solution
corresponds to  an equilibrium solution $x^{\ast}\in \mathcal{E} $
of the  dynamics \eqref{odyn}.
%
%------------------------------------------------
In the following section we present semistability
analysis for the two oscillator case.
\section{Semistability analysis of the two oscillator Kuramoto model}
\label{2semi}
 In \cite{bhat_nontangency}, the authors  derived
nontangency-based Lyapunov function results to show convergence and
semistability for continuous systems. These results do not require
the sign definiteness of the Lyapunov function. Instead, they need
the derivative of the Lyapunov function to be nonpositive and the
equilibrium to be a local minimizer of the Lyapunov function on the
set of points where the Lyapunov function derivative is negative.
The weaker assumptions on Lyapunov function in showing semistability
are supplemented by considering nontangency of the vector field to
invariant or negatively invariant subsets of the zero-level subset
of the Lyapunov function derivative. We apply the nontangency based
semistability results to the Kuromoto model.

Define $\mathcal{E}_1 = (-\pi/2,\pi/2)\subset \mathcal{E} $; the connected component of the equilibria that is stable. We  characterize a set $\mathcal{G}$ that contains $\mathcal{E}_1$ and is positively invariant. Let $x_2=h(x_1),x_1\in(-\pi/2,\pi/2)$, where $h$ is a smooth function,  define the upper and lower boundary of the set $\mathcal{G}$. The the upper and lower boundaries correspond to the trajectories of the system \eqref{odyn} with appropriate initial conditions. On differentiating $x_2=h(x_1)$ along the trajectories of \eqref{odyn}, we obtain
 \beqn
 \dot{x}_2&=& \frac{\partial h}{\partial x_1}x_2
 \eeqn
 which on integrating,
  \beqn
  h(x_1)=-K\sin{x_1}+C
  \eeqn
  where, $C=\Delta \omega$ is the constant of integration. Using the limiting boundary conditions  $(x_1,x_2)=(\pi/2,0)$ for  the upper boundary and    $(x_1,x_2)=(-\pi/2,0)$ for the lower boundary, $\mathcal{G}$ is defined as
  \beqn
  \mathcal{G} &=& \{(x_1,x_2) : x_1\in(-\pi/2,\pi/2),\\&&x_2\in [-K(1+\sin{x_1}), K(1-\sin{x_1})]\}.
  \eeqn
 The  vector field plot of the system along with the set $\mathcal{G}$ is shown  in  Figure \ref{phase}, where
 trajectories are trapped in  ${\mathcal G}$. Through the following Lemma we show that $\mathcal{G}$  is  positively invariant.
\bl The  set ${\mathcal G}$ is  positively invariant along the
trajectories of \eqref{odyn}.
\el
% {\bf Proof}:\\
\begin{proof}
 The boundary of $\mathcal{G}$ can be expressed as $\partial {\cal G}  = B_{x_1} \cup B_{x_2}$.
\beqn
B_{x_1} = \left\{
\begin{array}{lcl}
B_{x_1}^{+}& =& \{(x_1,x_2): x_1 = \frac{\pi}{2} \} \\
B_{x_1}^{-} &= &\{(x_1,x_2): x_1 = -\frac{\pi}{2} \}
\end{array}\right.
\eeqn

\beqn B_{x_2} = \{x: x_1 \in \left[ -\frac{\pi}{2}, \frac{\pi}{2}
\right], x_2 = C - K \sin(x_1),C = \pm K \} \eeqn

The normal to the boundary $B_{x_1}^{+}$ is $e_1$. The condition
\beqn e_1^\top \left[ \begin{matrix} x_2 \\
-Kx_2\cos(x_1) \end{matrix} \right] \leq 0 \eeqn
ensures that the trajectories at  $B_{x_1}^{+}$ point into  $\mathcal{G}$.
 We
have
\beqn e_1^\top \left[ \begin{matrix} x_2 \\ -Kx_2\cos(x_1)
\end{matrix} \right] = x_2. \eeqn
From the definition of $\mathcal{G}$,
$-K-K\sin(x_1) \leq x_2 \leq K-K\sin(x_1)$. Therefore, at the boundary of
$B_{x_1}^{+}$,  $x_1 = \frac{\pi}{2}$, $-2K \leq
x_2 \leq 0$. The same can be shown for the boundary $B_{x_1}^{-}$.

The boundary $B_{x_2}$ is a level set of the form $\eta(x_1,x_2) = x_2 + K\sin(x_1) = \pm K$.
The normal to the boundary $B_{x_2}$ is $\nabla_{x} \eta = [\begin{matrix} K\cos(x_1) & 1 \end{matrix}]$.
 The dot product of the normal with the vector field $f$ is  $\nabla_x\eta \left[ \begin{matrix} x_2 \\ -K\cos(x_1)\end{matrix} \right] = 0$.
 The vector field $f$ is tangential to the boundary $B_{x_2}$. Hence the claim.
%\qed
\end{proof}
  Stability is thus ensured when the phase difference  lie in an open
half-circle and  difference in angular
 frequencies satisfy $\vert \Delta\omega\vert \le K$.
 \brm
 Note that exact synchronization is ensured only when the oscillators have identical natural
 frequencies, that is,  $\Delta \omega=0$.
 \erm
 \begin{figure}
\centering {\includegraphics[scale=0.5]{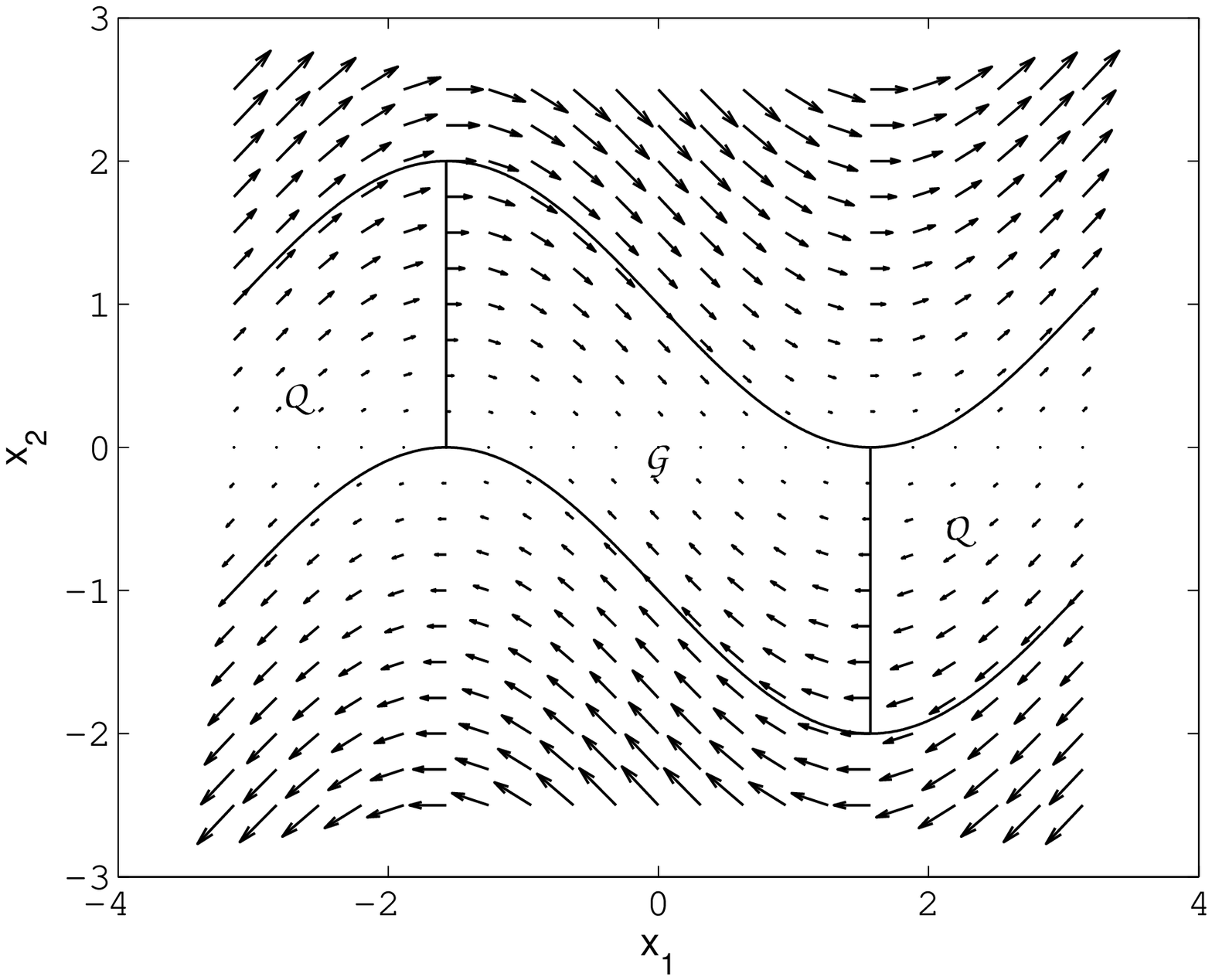}} \caption{Vector
field plot of the two-oscillator system  with $K=1$} \label{phase}
\end{figure}
We next show that every equilibrium in
$\mathcal{E}_1$ is semistable.
\subsection{Direction cone, tangent cone and nontangency}
A primary step in establishing the semistability result following
the approach given in \cite{bhat_nontangency} is to verify  the
sufficient condition of nontangency. This further requires the
computation of the direction cone and the tangent cone. In hitherto
published results \cite{bhat_nontangency, bhat_nonsmooth} the
direction cone is obtained by computing the limiting direction set
or  by expressing the vector field as a span of a set of linearly
independent vectors. If the aforementioned approaches fail in
characterizing the direction cone, then the direction cone is
computed by invoking the definition. The application of the
definition \ref{def:direction_cone} requires an  explicit
computation of the image set $f(\mathcal{U}) \setminus \{0\}$, which
is tedious and can be circumvented
 by an outer bounding set. Further, by avoiding the intersection over
 all open sets $\mathcal{U}$ in  $\overline{\mathrm{coco}\ (f(\mathcal{U}) \setminus  \{0 \})}$, leads to a
 superset of the direction cone. The resulting set, we call it  as an \textit{outer estimate} of the direction
 cone, denoted by $\hat{\mathcal{F}}_x$. The construction of
 this estimate is made clear by working out the direction  cone through the Kuramoto model.

Let $x= [a ~~ 0]^{\top} \in \mathcal{E}$, where $a\in \R$  and $\mathcal{U}\subset {\cal G}$ be a open
and bounded neighbourhood of $x$. We first consider the case $a\in (0,\pi/2)$.
Define $\mathcal{U} = \{z~\in~ \R^2:~ \parallel ~z-x \parallel_{\infty} < \epsilon \}$, where $\epsilon > 0$ is
such that $a+\epsilon <\pi/2$.
Then $\mathcal{U}$ consists of the following connected components:
%
%\begin{small}
\beqn
\begin{array}{ccl}
 A_1 & = & \{(x_1, x_2)  : a- \epsilon < x_1 < a+ \epsilon, 0 < x_2 < \epsilon \} \\
 A_2 & = & \{(x_1, x_2)  : a- \epsilon < x_1 < a+ \epsilon, -\epsilon < x_2 < 0 \} \\
 A_3 & = & \{(x_1, x_2)  : a- \epsilon < x_1 < a+ \epsilon, x_2 = 0 \}.
\end{array}
\eeqn
Now, $f(\mathcal{U})\setminus \{0\}= f(A_1) \cup f(A_2) \subset
\mathcal{A}^+ \cup \mathcal{A}^-$ where,
\beqn
\begin{array}{ccl}
\mathcal{A}^- & = & \left\{(y_1,y_2) \in \R^2: 0 < y_1 < \epsilon,\right.\\ &&\left.-K\cos(a-\epsilon)  < \frac{y_2}{y_1} <-K\cos(a+\epsilon),\right. \\
&&\left.-K\epsilon\cos(a-\epsilon) < y_2 <0 \right\}, \\
\mathcal{A}^+ & = & \left\{(y_1,y_2) \in \R^2:-\epsilon < y_1 < 0,\right.\\
 &&\left.- K\cos(a-\epsilon)< \frac{y_2}{y_1} <-K\cos(a+\epsilon),\right.\\
 &&\left. 0 < y_2 < K\epsilon \cos(a-\epsilon) \right\}.
\end{array}
\eeqn
By performing similar analysis for the case $a\in (-\pi/2,0)$, for every $x \in \mathcal{E}_1$, and for every $a\in(-\pi/2,\pi/2)$
the outer estimate of the direction cone (see Figure \ref{cones})  is
 \begin{figure}
\centering
{\includegraphics[scale=0.5]{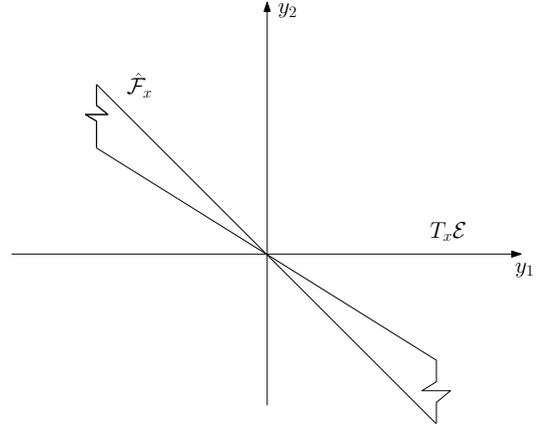}}
\caption{Tangent cone and outer estimate of direction cone}
\label{cones}
\end{figure}
\beqn
\hat{\mathcal{F}}_x &=& \left\{(y_1,y_2) \in \R^2: y_2 = p y_1,\right. \\
&&\left. p \in \left[-K\cos(a+\epsilon),
-K\cos(a-\epsilon)\right]\right\}. \eeqn
For every $x \in \mathcal{E}$, the tangent cone  is $T_x\mathcal{E}
=\{(c,0):c \in \R\}$. It now follows that for every $x \in
\mathcal{E}_1$, the intersection of tangent cone with the outer
estimate of the direction cone is $\{0\}$. The same fact is captured
in Figure \ref{cones}. Hence,  the nontangency condition holds for
every $x\in\mathcal{E}_1 $. The following result establishes the
semistability of \eqref{odyn}.
\bp
Every equilibrium in $\mathcal{E}_1$ of \eqref{odyn} is semistable relative to $\mathcal{G}$.
\label{semistability_two}
\ep
\begin{proof}
Consider the continuously differentiable function $V_1:\mathcal{G}
\longrightarrow \R$ defined by $V_1(x)=\frac{x_2^2}{2}$.
The derivative of $V_1$ along the trajectories of \eqref{odyn} is
$\dot{V}_1(x)  =  -  x_2^2\cos{x_1}\ \leq 0$  for every $x \in
\mathcal{G}$.
The set of points where the derivative of $V_1$ is zero is given by
$\dot{V}_1^{-1}(0) = \{(x_1, x_2) \in \mathcal{G} : x_2 = 0 \}$. The
largest negatively invariant subset of
$\overline{\dot{V}_1^{-1}(0)}$ is $ \mathcal{E}_1$. Let $\mathcal{K}
\triangleq \mathcal{G} \setminus \overline{\dot{V}_1^{-1}(0)}$.
Clearly, every equilibrium $z \in \mathcal{E}_1$ is a local
maximizer of $\dot{V}_1$ relative to $\mathcal{G}$ and a local
minimizer of $V_1$ relative to $\mathcal{K}$.

Consider an equilibrium $x \in \mathcal{E}_1$. There exists a
relatively open neighbourhood $\mathcal{V} \subseteq \mathcal{G}$ of
$x$ such that $\mathcal{V} \cap \mathcal{E} = \mathcal{V} \cap
\mathcal{E}_1$. It now follows that every $z \in \mathcal{V} \cap
\mathcal{E}$ is a local maximizer of $\dot{V}_1$ relative to
$\mathcal{G}$ and a local minimizer of $V_1$ relative to
$\mathcal{K}$. Moreover, ${f}$ is nontangent to $\mathcal{V} \cap
\mathcal{E}$ at every point $z \in \mathcal{V} \cap \mathcal{E}$
relative to $\mathcal{G}$.
Now, by applying (iii) of Corollary \ref{cor:7.2}, every $x \in
\mathcal{E}_1$ is semistable relative to $\mathcal{G}$.
\end{proof}

We end this section with the proof for the global synchronization in the two-oscillator network.
\subsection{Global synchronization}
\label{2global} We first note that when $| \Delta \omega | \leq K$,
all initial conditions of \eqref{sys} belong to the non-empty set
${\cal R}
\deff  {\cal Q} \cup {\cal G} $, where ${\cal Q}$ is defined by
\beqn
    {\cal Q} = \left\{x: x_1 \in (-\pi,\pi]\setminus [-\pi/2,\pi/2],\right.\\
\left. x_2 = \Delta \omega - K \sin(x_1), |\Delta \omega | \leq K \right\}.
\eeqn
The global synchronization result is established through the following proposition.
\bp
The oscillators in \eqref{sys}  synchronize for every initial condition $\theta_1(0), \theta_2(0)$, when  $| \Delta \omega | \leq K$.
\ep
\begin{proof}
We first recall from Proposition \ref{semistability_two} that every
equilibrium in ${\cal E}_1$ is semistable relative to ${\cal G}$ and
that the vector field $f$ at every point in $B_{x_1}\setminus {\cal
E}$  points into ${\cal G}$.  It suffices to show that there exists
$T\ge 0$ such that
 $\psi_T({\cal R})\cap ({\cal G} )\ne \emptyset $, where $\psi(t,x_0), t\ge 0$ denotes the solution to \eqref{odyn} corresponding to the initial condition $x(0)=x_0$.  The set $\psi_T({\cal R})\cap ({\cal G} )$ is empty if and only if there exist stable fixed points and/or closed orbits in ${\cal Q}$. Suppose {\it ad absurdum} the set $\psi_T({\cal R})\cap ({\cal G} )$ is empty.

It can be easily verified that the fixed points in ${\cal Q}$, given by ${\cal E} \cap {\cal Q} = \left\{  (x_1,x_2): x_1 \in (-\pi,\pi]/ \left[-\pi/2,\pi/2 \right]; x_2 = 0 \right\} $ are unstable.

Moreover,
\beq
    \frac{\partial f_1}{\partial x_1} + \frac{\partial f_2}{\partial x_2} =  - K \cos(x_1) > 0
\eeq in ${\cal Q}$. Hence, by Bendixson's theorem \cite{khalil},
there are no closed orbits lying entirely in the simply connected
set ${\cal Q}$, leading to a contradiction.

\end{proof}

We next consider an $N$-oscillator network (for any finite $N$) with arbitrary interconnection topology, non-uniform coupling strengths and arbitrary, but constant natural frequencies. We first identify a positively invariant set that contains the equilibrium set of interest (stable equilibria), and proceed to derive the non-tangency based semistability result.

\section{N-Oscillator network}
\label{Nmodel} A system of $N$ coupled Kuramoto oscillators with
 is considered here
\beq
    \dot{\theta}_i = \omega_i + \sum_{j=1, j\neq i}^{N} \frac{K_{ij}}{N} \sin(\theta_j - \theta_i )
\label{osc_network}
\eeq
where $\theta_i\in S^1$ is the phase of the $i$-th oscillator, $\omega_i$ is its natural frequency
and $ K_{ij}\in \R$ is the coupling gain. We assume symmetric coupling $(K_{ij}=K_{ji})$ between a pair of
oscillators and $\omega_i$ is a constant. Define $\Upsilon\deff \{(K_{11},K_{12},\ldots,K_{(N-1)N}): K_{ij}\ge 0, i<j\}$.  Let $K = \frac{1}{N} \diag(\tilde{K})$, the coupling strength
matrix with $\tilde{K} =[K_{12}, K_{13},\ldots ,K_{(N-1)N}]^\top$.  The set $\Upsilon$ represents all possible  interconnection topologies.

 If the oscillators are considered as nodes with every node
connected to every other node, then the nodes form a graph $ G$ with $N$ vertices and $e = {{N}\choose{2}}$ edges. The incidence matrix $B\in \R^{N\times e}$ of an oriented graph ${G}$ is constructed such that $B_{ij}=-1$ if the edge is incoming to the vertex $i$, $B_{ij}=1$ if the edge is outgoing  to the vertex $i$, and $0$ otherwise. We consider the  incidence matrix corresponding to an all-to-all connectivity and generate every possible interconnection topology by an appropriate choice of $\tilde{K}$. In the rest of the paper, $B$ has the fixed form
\beqn B=
\left[
\begin{array}{cccc}
1&1&\cdots &0 \\
-1&0&\cdots &0 \\
0&-1&\cdots &0 \\
\vdots & \vdots & \ddots & \vdots\\
0&0&\cdots &1\\
0&0&\cdots &-1
\end{array}\right].
\eeqn
The objective of our analysis is to obtain conditions on the interconnection topology such that  the oscillator network \eqref{osc_network} synchronizes.

Rewriting \eqref{osc_network} in vector form

\beq
    \dot{\theta} = \omega - B\; K \sin(B^\top \theta)
\label{eqn:system}
\eeq
where $\theta = [\theta_1, \theta_2, \ldots, \theta_N]^\top \in
\Gamma^N$,
With $X = (x_1,x_2,\ldots, x_e)^\top \deff \Delta \theta = B^\top
\theta \in \Gamma^e, V\deff  B^\top \dot{\theta}$, \eqref{eqn:system} takes the form

\beq
\left(
\begin{array}{l}
    \dot{X} \\
    \dot{V}
\end{array}\right)=F(X,V)=\left(
\begin{array}{l}
     V \\
   G(X)V
\end{array}\right)
\label{N_system} \eeq
 where $G(X) =
-B^\top B K\; \diag(\cos(X))$.
\brm Since $\rank(B^\top)=N-1$, the states $x_i,i=N,\ldots,e$ are
linear combinations of $x_i,i=1,\ldots, N-1$.
 \erm

\brm
Since $V=B^\top \dot{\theta}$ and $\Null (B^\top)=\mathrm{span} \{{\pmb 1}_N\}$, we note that $V=0$ in \eqref{N_system} corresponds to the synchronized condition in \eqref{eqn:system}.
\erm

 The following result will be useful
in the ensuing section.

 \bl \label{VGV}
 For all $X\in (-\pi/2,\pi/2)^e \cap \Col(B^\top), V\in
\Col(B^\top) $, $ V^\top G(X) V\leq 0$. \el
\begin{proof}
 \beqn
\begin{array}{lcl}
   V^\top G(X)V &=& -V^\top B^\top B K \diag(\cos(X)) V \\
    & =& - \dot{\theta}^\top B B^\top B K \diag(\cos(X))B^\top \dot{\theta} \\
    & =& - \dot{\theta}^\top BB^\top BP(X)B^\top \dot{\theta}
    \end{array}
\eeqn
Note that $P(X) \deff K\diag(\cos(X))$ is a positive semi-definite
diagonal matrix. Since $BB^\top B=NB$, we have
 $- \dot{\theta}^\top BB^\top BP(X)B^\top \dot{\theta}=- N\dot{\theta}^\top BP(X)B^\top \dot{\theta}\leq 0$.
\end{proof}
 The set of equilibria  of \eqref{N_system} is ${\mathcal
E} = \{(X,V):  V= 0 \}$. Linearizing
\eqref{N_system} about an equilibrium point $(X^{\ast},0)$,

\beq
    \begin{pmatrix} \dot{\tilde{X}} \\ \dot{\tilde{V}} \end{pmatrix} = \underbrace{\begin{bmatrix} 0_{e \times e} & I_{e
    \times e} \\ 0_{e \times e} & G(X^{\ast}) \end{bmatrix}}_{A} \begin{pmatrix} \tilde{X} \\ \tilde{V} \end{pmatrix}
\label{eqn:lin_dyn}
\eeq
where, $\tilde{X} = X - X^{\ast}$ and $\tilde{V} = V$. Further, for
all $X\in(-\pi/2,\pi/2)^e $, all the non-zero eigenvalues of $A$ are
negative. Thus, the equilibrium set of interest for semistability analysis is ${\mathcal
E}_s =(-\pi/2,\pi/2)^e\subset {\cal E}$.   Note that $X \in \Col(B^\top) \subset
R^e, V \in \Col(B^\top) \subset R^e$ and  $\rank(B^\top)
={N-1}$.

To proceed with the analysis, we define  a set ${\cal H}$ that is
positively invariant through the following Lemma. Let $e_i\in \R^e$
be the $i$th basis vector from the canonical basis.
\bl
The set
 \beqn
{\cal H} = \lbrace (X,V) : X \in \left( -\frac{\pi}{2},
\frac{\pi}{2} \right)^e \cap \Col(B^\top), V \in {\cal J} \rbrace \eeqn is
positively invariant along the trajectories of \eqref{N_system},
where
\beqn
\begin{array}{lll}
{\cal  J} = \lbrace V \in R^e: V = (B^\top \omega - B^\top B K
\sin(X));\\ \hspace{1cm} X \in \left( -\frac{\pi}{2}, \frac{\pi}{2} \right)^e
\cap \Col(B^\top); \\ \hspace{1cm} |e_i^\top B^\top \omega|\leq \frac{2}{N} \tilde{K}_i + \frac{1}{N} \sum_{\stackrel{j=1}{j \neq i}}^e |(B^\top B)_{ij}| \tilde{K}_{j} \sin\vert x_j\vert,\\
\hspace{1cm} i=1,\ldots,e \rbrace.
\end{array}
\eeqn
\label{pi_H}
\el
\begin{proof}
Note  that  there are $2e$ bounding surfaces for $X \in \left(
-\frac{\pi}{2}, \frac{\pi}{2} \right)^e$. The $i$th bounding surface
is characterized by $x_i = \frac{\pi}{2}$, $i = 1,\ldots,e$. The
boundary of ${\cal H}$ is given by $\partial{\cal H}= B_{X}\cup
B_V$, where $B_X=\cup_{i=1}^{2e} B_{X_i}$ with
\beqn B_{X_i}=\left\{
\begin{array}{lcl}
\left\{(X,V):x_i=\frac{\pi}{2}\right\}&,&i=1,\ldots,e\\
 \left\{(X,V):x_i=-\frac{\pi}{2}\right\}&,&i=e+1,\ldots,2e.
 \end{array}\right.
\eeqn
and $B_V$ to be defined later in the proof.
The outward normal to the bounding surface characterized by $x_i =
\frac{\pi}{2}$ is $e_i$. For  ${\mathcal H}$ to be positively
invariant,
\beq \left[\begin{matrix} e_i^\top & 0\end{matrix}\right]
\left[\begin{matrix} V \\ G(X)V \end{matrix}\right] \leq 0
\label{cond1} \eeq
 on the bounding surface.
Inequality \eqref{cond1} yields $e_i^\top V \leq 0$. For $V = B^\top
\omega - B^\top B K \sin(X)$, we get
\beq
    e_i^\top B^\top\omega \leq  e_i^\top B^\top B K \sin(X)
\label{cond2} \eeq where, $x_i = \frac{\pi}{2}$, and imposing the
condition $x_j\in [-\pi/2,\pi/2]$, we observe that
\beqn x_j \in\left\{
\begin{array}{lclll}
\left[0, \frac{\pi}{2} \right]\; &\mbox{if}&\; (B^\top B)_{ij} &= &1\\
\\
\left[-\frac{\pi}{2},0 \right]& \mbox{if}& (B^\top B)_{ij}& =& -1\\\\
\left[-\frac{\pi}{2},\frac{\pi}{2} \right]&\mbox{if}&(B^\top B)_{ij}
&=& 0.
\end{array}
\right. \eeqn
Similarly, the outward normal to the bounding surface characterized
by $x_i = -\frac{\pi}{2}$ is $-e_i$. Inequality \eqref{cond1} is
\beq
    -e_i^\top (B^\top \omega - B^\top B K\sin(X)) \leq 0
    \label{cond7}
\eeq for $x_i = -\frac{\pi}{2}$, and
\beqn x_j \in\left\{
\begin{array}{lclll}
\left[0, \frac{\pi}{2} \right]\; &\mbox{if}&\; (B^\top B)_{ij}& =& -1\\
\\
\left[-\frac{\pi}{2},0 \right]& \mbox{if}& (B^\top B)_{ij}& =& 1\\\\
\left[-\frac{\pi}{2},\frac{\pi}{2} \right]&\mbox{if}&(B^\top B)_{ij}
&=& 0.
\end{array}
\right. \eeqn
 Condition \eqref{cond7} can be rewritten as
\beq
 e_i^\top B^\top \omega \geq -e_i^\top B^\top B K \sin(X) \label{cond3} \eeq
where, $x_i = \frac{\pi}{2}$, and \beqn x_j \in\left\{
\begin{array}{lclll}
\left[0, \frac{\pi}{2} \right]\; &\mbox{if}&\; (B^\top B)_{ij}& =& 1\\
\\
\left[-\frac{\pi}{2},0 \right]& \mbox{if}& (B^\top B)_{ij} &= &-1\\\\
\left[-\frac{\pi}{2},\frac{\pi}{2} \right]&\mbox{if}&(B^\top B)_{ij}
&=& 0.
\end{array}
\right. \eeqn

 From \eqref{cond2} and \eqref{cond3}, it follows that
 \beq
    |e_i^\top B^\top \omega| \leq e_i^\top B^\top B K \sin(X)
    \label{cond4}
\eeq
where, $x_i = \frac{\pi}{2}$, and \beqn x_j \in\left\{
\begin{array}{lclll}
\left[0, \frac{\pi}{2} \right]\; &\mbox{if}&\; (B^\top B)_{ij} &=& 1\\
\\
\left[-\frac{\pi}{2},0 \right]& \mbox{if}& (B^\top B)_{ij}& =& -1\\\\
\left[-\frac{\pi}{2},\frac{\pi}{2} \right]&\mbox{if}&(B^\top B)_{ij}
&=& 0.
\end{array}
\right. \eeqn
Equation \eqref{cond4} can be expanded as follows
\beq
\begin{array}{lcl}
    |e_i^\top B^\top \omega|&\leq & e_i^\top B^\top B K \sin(X)\\
    & =& \frac{2}{N} \tilde{K}_i + \frac{1}{N} \sum_{\stackrel{j=1}{j \neq i}}^e |(B^\top B)_{ij}| \tilde{K}_{j} \sin\vert x_j\vert
\end{array}
    \label{cond5}
\eeq
where,  $x_j \in \left[-\frac{\pi}{2}, \frac{\pi}{2}\right]$. From \eqref{cond5},
a conservative bound (for which the set ${\cal H}$ is positively
invariant) can be obtained as
\beqn
    |e_i^\top B^\top \omega| \leq \frac{2}{N} \tilde{K}_i< \min(e_i^\top B^\top B K \sin(X)).
\eeqn
%
%(For $x_i = \frac{\pi}{2}$, $\min(e_i^\top B^\top B K\sin(X))$
%occurs at $x_j = 0$, $j=1,\ldots,e$, $j \neq i$).
This yields the following sufficient condition on the  coupling
strength
\beqn
    \tilde{K}_i \geq \frac{N}{2}|e_iB^\top \omega|
\eeqn
 where $\tilde{K}_i$ is the coupling strength corresponding to the $i$th edge and  $|e_iB^\top \omega|$ is
 the magnitude of the difference in natural frequencies between the oscillators at the vertices corresponding to the $i$th
 edge.\\
The boundary $B_V$ is a level set of the form $\eta(X,V) =
\frac{1}{2} (V + B^\top B K \sin(X))^\top (V + B^\top B K \sin(X)) =
C_1$, where $C_1=\omega_0^\top B B^\top \omega_0$, and  $\omega_0$
satisfies $|e_i^\top B^\top \omega_0|=\frac{2K_i}{N}, i=1,\ldots,e$.
The normal to the level set of  $\eta(X,V)$ is
\beq \nabla_{(X,V)} \eta = (V + B^\top B K \sin(X))^\top \left[
\begin{matrix} B^\top B P(X) & I_{e \times e} \end{matrix}
\right] \label{normal} .\eeq
 The dot product of the vector field  $F$ in \eqref{N_system} and the normal \eqref{normal},
 at any point on
the boundary $B_V$ is given by
\beqn
     (V + B^\top B K
    \sin(X))^\top \left[ \begin{matrix} B^\top B P(X) & I_{e
    \times e} \end{matrix} \right]\left[ \begin{matrix} V \\ G(X)V
    \end{matrix} \right]\\
     = (V + B^\top B K \sin(X))^\top(-G(X)V + G(X)V) = 0.
\eeqn
 Therefore, for every point on the boundary $B_V$, the vector field $F$ is tangential  to it. Therefore, the set
 ${\cal H}$ is positively invariant.
\end{proof}

%\subsection{Note on critical coupling}
The bound on the critical values of the coupling strengths below which the
trajectories of \eqref{N_system} will not be bounded by ${\cal H}$
for any initial condition, are given by
\begin{align}
    |e_i^\top B^\top \omega| & \leq \max(e_i^\top B^\top B K\sin(X)) \notag \\
     & = \max(\frac{2}{N} \tilde{K}_i + \frac{1}{N} \sum_{j \neq i} (B^\top B)_{ij}\tilde{K}_{j}\sin(x_j)) \notag \\
    & < \frac{2}{N} \tilde{K}_i + \frac{1}{N} \sum_{j \neq i} |(B^\top B)_{ij}|\tilde{K}_{j}.
    \label{cond6}
\end{align}
For networks with uniform coupling ($\tilde{K}_i = K_{0}$),
\eqref{cond6} reduces to
\beq
\begin{array}{lcl}
\max(|e_i^\top B^\top \omega|) & \leq & \frac{2K_0}{N} + \frac{K_0}{N} \sum_{j \neq i} |(B^\top B)_{ij}| \\
 & = & \frac{K_0}{N} \left( 2+\left({N\choose 2}-1-{N-2\choose 2}\right)\right) \\
 & = & \frac{2 K_0 (N-1)}{N}
 \end{array}
 \eeq
which can re-expressed as
\beq K_0 \geq \frac{N}{2(N-1)} \max(| e_i^\top B^\top \omega |) =
\frac{N}{2(N-1)} \| B^\top \omega \|_{\infty}. \label{bound} \eeq
The bound on the critical coupling gain $K_0$, in  \eqref{bound} is same as that derived by Jadbabaie et. al in \cite{Jadbabaie} for the onset of synchronization in uniform networks with all-to-all coupling. The
bounds for critical coupling derived in this paper are a
generalization of this result for networks with non-uniform
coupling and arbitrary interconnection topology.
%%%%%Tangent and Direction cone%%%%%%%%%%%%%%%
%
%
 In summary, we have derived the necessary and sufficient conditions on the coupling strengths for the positive invariance of set $\cal H$ (the set of phase angles for which $(\theta_i - \theta_j) \in [-\pi/2,\pi/2]$). The following subsection utilizes this result to prove the semistability of \eqref{N_system}, therefore synchronization in the Kuramoto model \eqref{osc_network}.

\subsection{Nontangency and semistability}
For every $X \in \mathcal{E}_s$, the tangent cone  is
$T_X\mathcal{E}_s =\{(\gamma d,0):d \in {\cal E}_s,\gamma\in \R \}$.
 For an equilibrium point $(X,0) \in {\cal E}_s $, let $D$ be an
 open and bounded neighbourhood of $X$ defined as
\beqn \begin{array}{lcl}
 D&=&\{(x,v)\in \R^e\times \R^e
:\vectornorm{(X,0)-(x,v)} <
 \epsilon(X),\\
 &&\epsilon(X)>0\}\subset {\mathcal H}. \end{array}\eeqn
  Then, an outer estimate of the  direction cone  is
given by
\beqn  \hat{\mathcal{F}}_X= \left\lbrace \lambda(v,G(x)v): (x,v) \in
D, \lambda>0 \right\rbrace. \eeqn
%%%%%%%%%%%%%%%%%%%%%%%%%%%%%%%%%%%
We next show that the nontangency between the vector field $F$  and the set of equilibria ${\mathcal E}_s$ holds
through the following Lemma.
\begin{lemma}\label{nontangent}
For every $X\in {\mathcal E}_s$, $T_{X}\mathcal{E}_s \cap
\hat{\mathcal{F}}_X = \{0\}$ if the network graph corresponding to \eqref{osc_network} is connected. \label{nontangency}
\end{lemma}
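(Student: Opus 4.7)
The plan is to reduce the nontangency condition to a spectral property of the weighted graph Laplacian. Take any $(d_1,d_2) \in T_X\mathcal{E}_s \cap \hat{\mathcal{F}}_X$. Since every tangent vector to $\mathcal{E}_s$ has vanishing second component, $d_2 = 0$. By the definition of the outer estimate, there exist $\lambda > 0$ and $(x,v) \in D$ with $(d_1,d_2) = \lambda(v,G(x)v)$, and combining this with $d_2 = 0$ yields $G(x)v = 0$. The task thus reduces to showing that $G(x)v = 0$ for $(x,v)$ in a sufficiently small neighbourhood $D$ of $(X,0)$ forces $v = 0$.

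To do this I reuse the algebraic identity behind Lemma \ref{VGV}. Because $D \subset \mathcal{H}$ and $\mathcal{H}$ lies in the invariant submanifold with $V$-components in $\Col(B^\top)$, I may write $v = B^\top w$ for some $w \in \R^N$. From $G(x)v = 0$ it follows that $v^\top G(x) v = 0$, and the same manipulation used in Lemma \ref{VGV} (in particular $BB^\top B = NB$) reduces this to
\[
0 \;=\; v^\top G(x) v \;=\; -N\, w^\top B\, P(x)\, B^\top w,
\]
where $P(x) \deff K\diag(\cos(x))$. Shrinking $\epsilon(X)$ if necessary, I can guarantee $\cos(x_i) > 0$ throughout $D$, so $P(x)$ is a diagonal positive semi-definite matrix whose support coincides with that of $K$.

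Now $B P(x) B^\top$ is precisely the weighted Laplacian of the coupling graph with edge weights $K_{ij}\cos(x_{ij})/N > 0$. Positive semi-definiteness together with the vanishing quadratic form forces $B P(x) B^\top w = 0$. The connectedness hypothesis enters exactly here: when the coupling graph is connected, a standard fact in algebraic graph theory gives $\Null(B P(x) B^\top) = \Span(\pmb{1}_N)$. Hence $w \in \Span(\pmb{1}_N)$, and since $B^\top \pmb{1}_N = 0$, I conclude $v = B^\top w = 0$. Consequently $d_1 = \lambda v = 0$, so $T_X\mathcal{E}_s \cap \hat{\mathcal{F}}_X = \{0\}$.

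The main obstacle, and the only place the connectedness hypothesis is used, is the identification of $B P(x) B^\top$ as a weighted Laplacian whose kernel collapses to $\Span(\pmb{1}_N)$. If the network were disconnected, $B P(x) B^\top$ would possess additional null directions (the indicators of the connected components), producing nonzero $v \in \Col(B^\top)$ with $G(x)v = 0$, and nontangency would fail along those directions. All remaining ingredients (the constraint $v \in \Col(B^\top)$, the quadratic-form reduction, and the uniform positivity of $\cos(x_i)$ on $D$) follow mechanically from the invariance of $\mathcal{H}$ and the choice $X \in \mathcal{E}_s$.
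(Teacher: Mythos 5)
Your proof is correct and follows essentially the same route as the paper's: both reduce nontangency to showing that $G(x)v=0$ forces $v=0$ for $v\in\Col(B^\top)$, and both settle this via the positive semi-definite factorization of $BP(x)B^\top$ together with connectedness of the coupling graph. The only cosmetic difference is that you pass through the vanishing quadratic form $w^\top BP(x)B^\top w=0$ and invoke the standard kernel characterization of a connected weighted Laplacian, whereas the paper chases the nullspace chain $\Null(BPB^\top)=\Null(P^{\frac{1}{2}}B^\top)$ and argues that $\Col(B^\top)\cap\Null(P)$ is trivial exactly when the graph is connected.
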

\begin{proof} From the second line of \eqref{N_system}, \
\beqn \dot{V} = G(X)V = -B^\top B K \diag(\cos(X)) B^\top
\dot{\theta}.\eeqn
We  claim  that if the network graph corresponding to \eqref{osc_network} is connected, $\dot{V} = 0$ if and only if $V = 0$. If $V =
0\implies \dot{V}=G(X)V=0$. Conversely,
\beqn
\begin{array}{lll}
 \dot{V}&=&-B^\top B PB^\top \dot{\theta}=0\\
&\implies & \dot{\theta} \in \Null{(B^\top B PB^\top)}\\
 &\implies & \dot{\theta} \in \Null{(B^\top B PB^\top)}=\Null{( B PB^\top)}.
\end{array}
\eeqn
Since $P(X)$ is a positive semi-definite diagonal matrix, we let $P = P^{\frac{1}{2}}P^{{\frac{1}{2}}^\top}$, where $P^{\frac{1}{2}}_{ii} = \sqrt{P_{ii}}$ and $P^{\frac{1}{2}} = P^{{\frac{1}{2}}^\top}$. Hence, $\Null(BPB^\top) = \Null((BP^{\frac{1}{2}})(BP^{\frac{1}{2}})^\top) = \Null(P^{\frac{1}{2}}B^\top)$. Therefore,
\begin{align*}
	\dot{\theta} \in \Null(B^\top B P B^\top) \Rightarrow \dot{\theta} \in \Null(P^{\frac{1}{2}} B^\top)
\end{align*}
\noindent $	\Rightarrow \dot{\theta} \in \Null(B^\top)$ or $B^\top \dot{\theta} \in \Null(P^{\frac{1}{2}})=\Null(P)$. Thus, $B^\top \dot{\theta} \in \Null(P)$ only if $\Col(B^\top) \cap \Null(P) \neq \emptyset$. Further,
\begin{align*}
	\Null(P) = \Span\{ e_i \in \R^e: \tilde{K}_i = 0 \} \\
	\Col(B^\top) = \Span \{ B_i \in \R^e : 1 \leq i \leq N-1 \}
\end{align*} where $B_i$ is the $i$-th row vector of $B$. We note that for connected graphs, $\dot{V}=0$ if and  only if $V=0$ ( $\Col(B^\top) \cap \Null(P) =\emptyset$ if and only if the graph is connected).

The intersection of  $T_{X}\mathcal{E}_s$ with $
\hat{\mathcal{F}}_X$ yields
\beqn
\begin{array}{rll}
    \lambda v &=& \gamma d \\
    \lambda G(x)v &= &0
    \end{array}
\eeqn
Since  $G(x)v = 0$ if and only if  $v=0$ it follows that
${T_{X}\mathcal{E}_s \cap \hat{\mathcal{F}}_X = \{0\}}$. \end{proof}

The following result establishes the semistability of
\eqref{N_system}.
\bp Every equilibrium in $\mathcal{E}_s$ of \eqref{N_system} is
semistable relative to $\mathcal{H}$.
\label{semistability_N}
\ep
\begin{proof}
Consider the continuously differentiable function $V_2:\mathcal{H}
\longrightarrow \R$ defined by $V_2(x)=\frac{V^\top V}{2}$.
The derivative of $V_2$ along the trajectories of \eqref{N_system}
is $\dot{V}_2 =  V^\top G(X) V\le 0$   for every $(X,V) \in
\mathcal{H}$, which follows from Lemma \ref{VGV}.
The set of points where the derivative of $V_2$ is zero is given by
$\dot{V}_2^{-1}(0) = \{(X, V) \in \mathcal{H} : V = 0 \}$. This claim follows from Lemma \ref{VGV} and  the arguments in Lemma \ref{nontangent}. The
largest negatively invariant subset of
$\overline{\dot{V}_2^{-1}(0)}$ is $ \mathcal{E}_s$. Let $\mathcal{L}
\triangleq \mathcal{H} \setminus \overline{\dot{V}_2^{-1}(0)}$.
Clearly, every equilibrium $Z \in \mathcal{E}_s$ is a local
maximizer of $\dot{V}_2$ relative to $\mathcal{H}$ and a local
minimizer of $V_2$ relative to $\mathcal{L}$.

Consider an equilibrium $X \in \mathcal{E}_s$. There exists a
relatively open neighbourhood $\mathcal{U} \subseteq \mathcal{H}$ of
$X$ such that $\mathcal{U} \cap \mathcal{E} = \mathcal{U} \cap
\mathcal{E}_s$. It now follows that every $Z \in \mathcal{U} \cap
\mathcal{E}$ is a local maximizer of $\dot{V}_2$ relative to
$\mathcal{H}$ and a local minimizer of $V_2$ relative to
$\mathcal{L}$. Moreover, from Lemma \ref{nontangency}, the vector
field $F$ in \eqref{N_system} is nontangent to $\mathcal{U} \cap
\mathcal{E}$ at every point $Z \in \mathcal{V} \cap \mathcal{E}$
relative to $\mathcal{H}$.
Now, by applying (iii) of Corollary \ref{cor:7.2}, every $X \in
\mathcal{E}_s$ is semistable relative to $\mathcal{H}$.
\end{proof}
%------------------------------------------------------------
We next show that ${\cal H}$ is an attracting set in ${\cal M}$, where
 \begin{align*}
{\cal M} \deff \left\{ (X,V) : X \in  \left( -\pi+\delta, \pi-\delta \right]^e  \cap \Col(B^\top), \delta>0, V \in {\cal J} \right\}. \end{align*}
\bp
\label{attractor}
There exists a non-empty, connected, compact and positively invariant set ${\cal N}\subseteq {\cal M}$ containing ${\cal H}$ such that ${\cal H}$ is an attracting set of \eqref{N_system},
if the coupling gains satisfy the condition
\beq
\begin{array}{lll}
&& \hspace{-1cm} \frac{1}{N} \sum_{i=1, \tilde{K}_i > 0}^{e}  (2 \tilde{K}_i - (N-2)\Delta_m ) \sin \vert \delta \vert   \\
   &>& \sum_{i=1}^{e} \vert e_i^{\top} B^\top \omega \vert  + \frac{1}{N} \sum_{i=1, \tilde{K}_i = 0}^{e}  (N-2)\Delta_m
\end{array}
\label{attracting_condition}
\eeq
where the non-zero $\tilde{K}_i$s satisfy $\tilde{K}_i \ge \frac{(N-2)}{2} \Delta_m$, and $\Delta_m=({\tilde{K}_{\max}}-{\tilde{K}_{\min}})$.
\ep
\begin{proof}
Consider a locally Lipschitz and regular potential function of the form
\beqn
	V_3(X) = \sum_{i=1}^{e} \vert x_i\vert  - (N-1)\frac{\pi}{2}
\eeqn
defined over ${\cal M}$.
Note that $V_3(X) > 0,\;\;\forall \;X\in {\cal M}\setminus {\cal \bar{H}}$ and $V_3(X) =0$ on a compact set contained in ${\cal \bar{H}}$. The generalized gradient  of $V_3$ is
\beqn
	\frac{\partial V_3}{\partial x_i} = \left\{
\begin{array}{lcl}
\sign(x_i) &\mathrm{if} & x_i \neq 0 \\
	  \left[-1,1\right] & \mathrm{if}&  x_i = 0
\end{array}\right.
\eeqn
The set-valued Lie derivative \cite{cortes_discontinuous}, denoted by $\tilde{{\cal L}}_FV_3(X)$ of $V_3$  is obtained as follows.
\beqn
\tilde{{\cal L}}_FV_3(X)=
\left\{
\begin{array}{lll}
 \sum_{i=1}^{e} \sign(x_i)v_i &{ \mbox if} &  x_i \neq 0 \; \forall i \in \{1,\ldots,e \}\\
\emptyset  & {\mbox if } &  v_i\ne 0\; {\mbox when }\; x_i= 0\\
 \sum_{i=1}^{e} \sign(x_i)v_i & {\mbox if } &  v_i=0 \; {\mbox when}\; x_i= 0.
\end{array}
\right.
\label{der_nonsmooth_potential}
\eeqn where $V = (v_1,\ldots,v_e)^\top$.
$\cal{H}$ is an attracting set in $\cal{M}$ if $\tilde{{\cal L}}_FV_3(X) < 0$ $\forall$ $X \in {\cal{M}} \setminus \bar{\cal {H}}$ and $V \ne 0$, which is true if the following condition holds.
\beqn
%\begin{array}{lll}
\sum_{i=1}^{e} (\sign(x_i) e_i^\top B^\top \omega - \frac{1}{N} \left[ \sum_{j=1}^{e} (B^\top B)_{ij} \sign(x_j) \tilde{K}_j \right]\times\\
\sign(x_i) \sin\vert x_i \vert) < 0
 %\end{array}
\eeqn
where $\sin(\vert X \vert) \deff  (\vert \sin x_1\vert,\ldots,\sin\vert x_e \vert)^\top$.
\beqn
\begin{array}{lll}
 \frac{1}{N} \sum_{i=1}^{e} \sum_{j=1}^{e} \hspace{-1cm}& & (B^\top B)_{ij}\sign(x_j)  \tilde{K}_j \sin x_i )\\ &\ge&  \frac{1}{N} \sum_{i=1}^{e}  (2 \tilde{K}_i - (N-2)\Delta_m ) \sin \vert x_i \vert   \\ & >& \sum_{i=1}^{e} \vert e_i^{\top} B^\top \omega \vert  \\
 &\ge&  \sum_{i=1}^{e} (\sign(x_i) e_i^\top B^\top \omega \end{array}
\eeqn
which yields the sufficient condition \eqref{attracting_condition}. Finally, the set ${\cal N}$ is  characterized as
\begin{align*}{\cal N} \deff \{ (X,V) : X \in  \left( -\pi+\delta, \pi-\delta \right]^e  \cap \Col(B^\top), \\  \sum_{i=1}^{e} \vert x_i\vert  - (N-1)\pi \leq 0, V \in {\cal J} \}. \end{align*}
\end{proof}

%
%----------------------------------------------------------------------
We end this section with the derivation of  the synchronized frequency through the following Lemma.
\bl
For all initial conditions in $\mathcal{N}$, the angular frequencies of \eqref{eqn:system} synchronize to the mean of the natural frequencies of the oscillators.
\el
\begin{proof}
Through Propositions \ref{semistability_N} and \ref{attractor}, it was established that for all initial conditions in $\mathcal{N}$, the oscillators synchronize, which corresponds to $V=0$ in \eqref{N_system}. This further implies that \beq V = B^\top \omega - B^\top B K \sin(X^*) = 0 \label{sync_condition_1} \eeq where $(X^*,0)$ is the fixed point of  \eqref{N_system}. From \eqref{eqn:system}, we obtain \beq \Omega^*{\pmb 1}_N\deff\displaystyle{\lim}_{t\rightarrow \infty} \dot{\theta}(t) = \omega - BK \sin(X^*). \label{sync_condition_2}. \eeq  Pre-multiplying \eqref{sync_condition_1} by $\frac{1}{N} B$, we obtain $\frac{1}{N}BB^\top K \sin(X^*) = BK \sin(X^*) = \frac{1}{N} BB^\top \omega$. Equation \eqref{sync_condition_2} reduces to $\Omega^*{\pmb 1}_N = (I - \frac{1}{N} BB^\top) \omega = \left( \frac{1}{N} \sum_{i=1}^{N} \omega_i \right) {\pmb 1}_N$.
\end{proof}
To summarize our results in this section, we have established that there always exists a set of coupling strengths for all natural frequencies of the oscillators, such that for all initial conditions $X(0) \in {\cal N}$ the oscillators synchronize, provided the network graph is connected.
\section{Simulation Results}
\label{num} We first consider an open-chain network with $N=3$ and non-uniform coupling $K=\mathrm{diag}\{3,2,0\}$ and  natural frequencies $\omega=(1,2,3)$.
With $X\deff (x_1,x_2,x_3)$, the reduced-order system (note that
$x_3=x_2-x_1$) can rewritten as
 \beqn
 \dot{x}_1&=&-1-6\sin(x_1)-2\sin(x_2)\\
 \dot{x}_2&=&-2-3\sin(x_1)-4\sin(x_2).
 \eeqn
The stream plot for this case is shown in
 Figure \ref{3d1}. In the interval $X\in(-\pi,\pi]^3$, there exists only one fixed point at $(x_1,x_2)=(0,-\pi/6)$ that corresponds to the synchronized state. In this case, we observe that ${\cal N}={\cal M}$.
 \begin{figure}[h]
\centering {\includegraphics[scale=0.6]{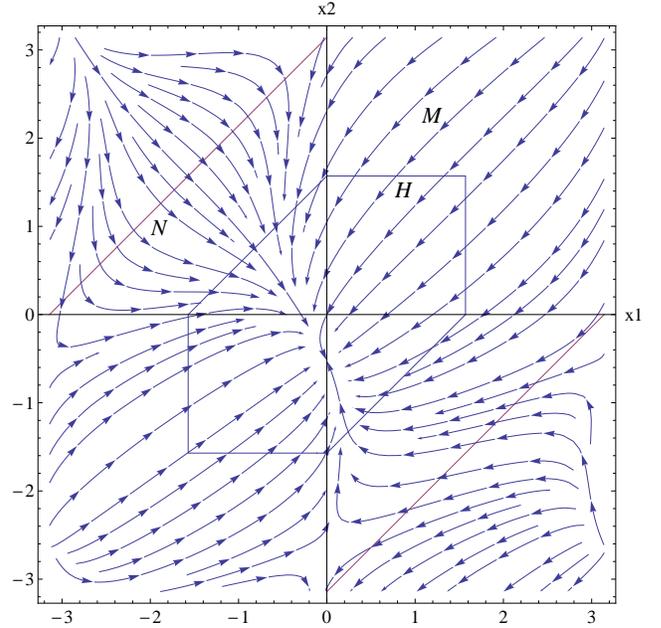}}
\caption{Stream plot of the three-oscillator system } \label{3d1}
\end{figure}
We next consider a network (see Figure \ref{network}) with $N=5$ and non-uniform coupling and   natural frequencies $\omega=(1,2,3,4,5)$.
 \begin{figure}[h]
\centering {\includegraphics[scale=0.6]{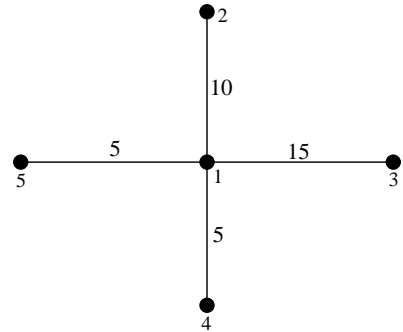}}
\caption{Network graph of five oscillators } \label{network}
\end{figure}
The plot of angular frequencies versus time is shown in Figure \ref{3d2} for the initial condition $\theta(0)=(-2\pi/3,2\pi/3,\pi/3,-\pi/6,0)$, where the frequencies
 synchronize to the mean  $\Omega^*=\unit{3}\radian\per\second$.
\begin{figure}[h]
\centering {\includegraphics[scale=0.5]{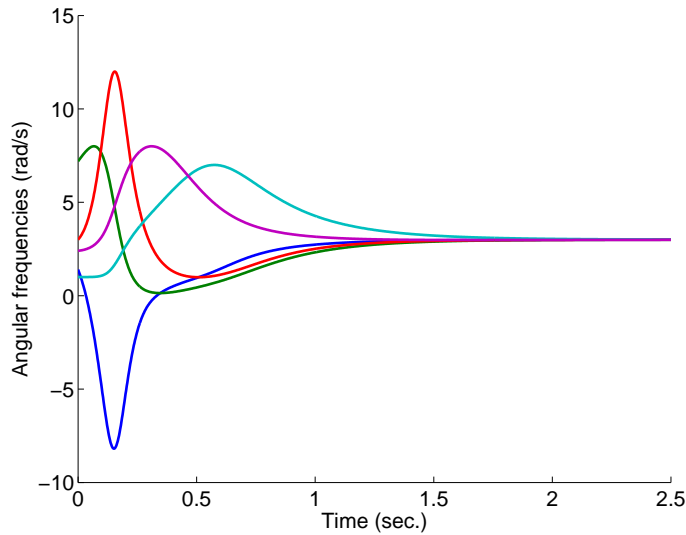}}
\caption{Time-response of angular frequencies } \label{3d2}
\end{figure}
%----------------------------------------------------------------------------------------
\section{Conclusions}
\label{con} The objective of this work was to obtain convergence
results for the Kuramoto  model, which was presented in the
framework of semistability theory. For illustrative purpose, these
results were obtained for the two-oscillator case. In arriving at the semistability result, the nontangency between the vector field and
the tangent space of the set of equilibria was established by using
a novel method for obtaining an estimate  of the  outer bound of the
direction cone.
In the $N$-oscillator case we consider networks with connected graphs, arbitrary interconnection topology, non-uniform coupling strengths and non-identical natural  frequencies. We establish that such a network synchronizes under certain conditions on the coupling gains which have been  explicitly derived.
%-------------------------------------------
\bibliographystyle{ieeetr}
\bibliography{references}

\end{document}